\newcommand\blfootnote[1]{%
  \begingroup
  \renewcommand\thefootnote{}\footnote{#1}%
  \addtocounter{footnote}{-1}%
  \endgroup
}
\providecommand{\syl}[2]{{Syl}_{#1}(#2)}
\theoremstyle{plain}
\newtheorem{theorem}{Theorem}[section]
\newtheorem{Proposition }[theorem]{Proposition }
\newtheorem{algorithm}[theorem]{Algorithm}
\newtheorem{lemma}[theorem]{Lemma}
\newtheorem{corollary}[theorem]{Corollary}
\newtheorem*{claim*}{Claim}
\theoremstyle{definition}
\newtheorem{remark}[theorem]{Remark}
\newtheorem{definition}[theorem]{Definition}
\newtheorem{example}[theorem]{Example}
\renewcommand{\c}[1]{\overline{#1}}
\def\o{\oplus}
\def\Z{\mathbb{Z}}
\def\lcm{\text{lcm}}
\providecommand{\Zn}[1]{\Z/#1\Z}
\def\T{\tau}
\def\Tn{\tau_{(n)}}
\def\Tp{\tau_{(p)}}
\def\Tnp{\tau'_{(n)}}
\def\G{U'(n)}
\def\Gp{U'(p)}
\begin{document}
\title{}
\begin{center}\huge{On the Characterization of $\tau_{(n)}$-Atoms}\end{center}

\author[A. Hern\'{a}ndez-Espiet]{A. \ Hern\'{a}ndez-Espiet}
\address{Department of Mathematics\\ University of Puerto Rico\\
Mayagüez, 00682, Puerto Rico}
\email{andre.hernandez@upr.edu}

\author[R. M. Ortiz-Albino]{R.M.\ Ortiz-Albino}
\address{Department of Mathematics\\ University of Puerto Rico\\
Mayagüez, 00682, Puerto Rico}
\email{reyes.ortiz@upr.edu}

\maketitle

\begin{abstract}
In 2011, Anderson and Frazier define the concept of $\tau_{(n)}$-factorization, where $\tau_{(n)}$ is a restriction of the modulo $n$ equivalence relation. These relations have been worked mostly for small values of $n$. However, it is sometimes difficult to extend findings to larger values of $n$. One of these problems is finding $\tau_{(n)}$-irreducible elements or $\tau_{(n)}$-atoms in order to characterize elements that have a $\tau_{(n)}$-factorization in $\tau_{(n)}$-atoms. The $\tau_{(n)}$-irreducible elements are well known for $n=0,1,2,3,4,5,6,8,10,12$. However, the problem of determining the $\tau_{(n)}$-atoms becomes much more difficult the larger $n$ is. In this work, we present an algorithm to construct families of $\tau_{(n)}$-atoms. It is shown that the algorithm terminates in finitely many steps when $n$ is the safe prime associated to a Sophie Germain prime.
\end{abstract}
\section{Introduction}\label{intro}

\blfootnote{\textit{2010 Mathematics Subject Classification}. 13A05, 11A99, 20K01\\ \textit{Key words and phrases}. $\Tn$-Factorizations, Factorizations over Integral Domains, Generalized Factorizations}

Anderson and Frazier developed the theory of $\tau$-factorizations \cite{anderson_frazier_2011}, or $\tau$-products on integral domains, where $\tau$ is a symmetric relation that determines which elements are allowed to be multiplied. This concept can be visualized as the study of a restriction to the multiplicative operation. That is, two nonzero nonunit elements are allowed to be multiplied if and only if they are related with respect to the symmetric relation $\tau$. Formally, let $\tau$ be  a symmetric relation on the nonzero nonunit elements of an integral domain $D$ (denoted by $D^\#$). We say that $x\in D^\#$ has a $\tau$-factorization if $x=\lambda x_1*** x_n$, where $\lambda$ is a unit and for any $i\neq j$, $x_i\tau x_j$. In such case, we also say that each $x_i$ is a $\tau$-factor of $x$ (denoted by $x_i|_\tau x$). Notice that, $x=x$ and $x=\lambda(\lambda^{-1}x)$ are both (vacuously) $\tau$-factorizations, called the trivial ones. In order to distinguish between an usual product from a $\tau$-product we will denote $a_1\cdot a_2\cdots a_n$ (respectively, $a_1*a_2***a_n$) the usual product (respectively, a $\tau$-product) of the elements $a_1,a_2,\cdots,a_n$. An element $x\in D^\#$ whose only $\tau$-factorizations are the trivial ones are called $\tau$-atoms or $\tau$-irreducible elements. For example, if $\tau=S\times S$ where $S$ is a subset of the nonzero nonunit elements, then the $\tau$-products are the usual products of elements in $S$. The theory of $\tau$-factorization was also called the theory of generalized factorizations. This is because one can consider the relation $\tau=S\times S$, where $S$ is any specific subset of $D^\#$. As an example, take $S$ to be the set of irreducible elements (resp. primes, primal elements, etc), hence the $\tau$-products would be factorizations into irreducible elements (resp. into primes, primal elements, etc). Also, if $S=D^\#$, then the $\tau$-products and the usual products of $D^\#$ coincide.\\

In \cite{anderson_frazier_2011}, the authors presented results based on a classification of types of relations. Two of these types are divisive (if $x\tau y$ and $x'|x$, then $x'\tau y$) and associated-preserving (if $x\tau y$ and $x'\tau x$, $x'\tau y$) relations. Associated-preserving relations are well behaved compared to other relations, as can be seen in \cite{anderson_frazier_2011}, and allow to omit the unit multiple in front (if $\lambda x_1*** x_n$ is a $\tau$-factorization, so is $x_1*** x_{i-1}*(\lambda x_i)* x_{i+1}*** x_n$). Divisive relations allow to admit $\tau$-refinements; that is, whenever $x=\lambda x_1***x_n$ and $x_i=y_1*** y_m$ are $\tau$-factorizations, so is $x=\lambda x_1*** x_{i-1}* y_1*** y_m*x_{i+1}*** y_m$. It is important to note that if a relation is divisive, then it is associated-preserving. Most of the resutls in \cite{anderson_frazier_2011} assumed relations to be divisive.\\

On the other hand, to avoid studying the usual product of the structure we must avoid relations that are both reflexive and divisive because, by reflexivity, for any $x,y\in D^\#$, $(xy)\tau(xy)$. On the other hand, by the divisivity of $\tau$, $x\tau y$. This motivated Ortiz and Serna \cite{serna} to study the behavior of the $\tau$-factorizations, when $\tau$ is an equivalence relation (aside of their historical impotance). Their main results were based on unital equivalence relations (that is, equivalence relations with the following property: if $x\tau y$, then for any unit $\tau$, $(\lambda x)\tau(\lambda y)$). If $\tau$ is an equivalence unital relation, one may assume that the equivalence relation is also associated-preserving. If $\tau'$ is the associated-preserving closure of $\tau$, then $x$ has a $\tau$-factorization (respectively, $x|_\tau y$, $x$ is a $\tau$-atom) if and only if $x$ has a $\tau'$-factorization (respectively, $x|_{\tau'}y$, $x$ is a $\tau'$-atom). In other words, by Theorem 4.17 of \cite{serna}, $x_1***x_n$ is a $\T'$-factorization if and only if there are units $\lambda,\lambda_1,\ldots,\lambda_n$, such that $\lambda(\lambda_1x_1)***(\lambda_n x_n)$ is a $\T$-factorization. See \cite{serna} to see other properties that are preserved with respect to $\tau$ and $\tau'$.\\

As a special case, the authors of \cite{serna} consider the relation $\tau_{(n)}$ defined on $\Z^\#$ by Anderson and Frazier \cite{anderson_frazier_2011}, and followed by Hamon \cite{hamon}. The relation $\tau_n=\{(x,y)|x-y\in (n)\}$, which can be seen as the equivalence relation modulo $n$ restricted to $\Z^\#$. Hamon gave a characterization for which $n$ it is true that every element of $\Z^\#$ has a $\tau_{(n)}$-factorization into $\tau_{(n)}$-atoms. Hamon's results include $\{0,1,2,3,4,5,6,8,10,12\}$. In 2012, Juett \cite{juett2014} found out that $12$ does not satisfy such property. It is important to note that these results did not require knowing which elements are $\Tn$-atoms.\\

An attempt to find out $\tau_{(n)}$-atoms for any $n$ was given by Lanterman \cite{lanterman} in 2012 (presented at JMM 2013). In his paper, there is no information about the technique or method used to reduce the problem of finding the $\tau_{(n)}$-atoms to a finite number of cases by hand. This research shows that his technique only applies when both $n$ and $\frac{n-1}{2}$ are primes (for more details, see Section \ref{prelim}). A second attempt of finding $\tau_{(n)}$-atoms was done by Molina in \cite{molina}, by using the formula of the number of $\tau_{(n)}$-factors for any positive integer. He was able to provide the explicit forms of the $\tau_{(n)}$-atoms for $n=8,10,12$. The technique used in this work is more general and easier than previous attempts.

\section{Preliminaries}\label{prelim}

Serna's result linking $\Tn$-factorizations to $\Tnp$-factorizations tells that to determine whether or not a number has a $\Tn$-factorization, that we could work with $\Tnp$-factorizations instead. In general, the results obtained in this report were obtained using the equivalence relation $\Tnp$. We are now in a position to define $\G$.\\

For $n\geq 2$, let $U(n)=\{m\in\Z/n\Z|\gcd(n,m)=1\}$, the multiplicative group of units in $\Z/n\Z$ with order $\varphi(n)$, where $\varphi(n)$ is Euler's totient function, as in \cite{gallian_2013}. We define $\G$ for $n\geq 3$ to be the set $\{\c{k}|k\in U(n)\}$, where 

$$\c{k}=k\cup -k=\{x\in\Z|x\equiv k\pmod n\text{ or }x\equiv -k\pmod n \}.$$ 
\\
Since $U'(2)=U(2)$, we focus on $n\geq 3$. We shall see that $\G$ is a group with operation, using multiplicative notation, $\c{a}\cdot \c{b}=\c{ab}$, including some of its properties.

\begin{Proposition }
$(\G,\cdot)$ is an abelian group of order $\frac{\varphi(n)}{2}$, isomorphic to $U(n)/\{\pm 1\}$.
\end{Proposition }

\begin{proof}
Let $(\c{a},\c{b})=(\c{c},\c{d})$, for $\c{a},\c{b},\c{c},\c{d}\in\G$. By the defintion of $\G$, $n|a-c$ or $n|a+c$ and $n|b-d$ or $n|b+d$. So, there exist $\alpha,\beta\in\Z$ such that $n\alpha=a\pm c$ and $n\beta=b\pm d$, where occurrences of $\pm$ are possibly exclusive and are independent. We have that

$$ab=(\pm c+n\alpha)(\pm d+n\beta)=\pm cd+n(\pm\alpha d\pm c\beta +n\alpha\beta).$$

Regardless of the choices for each $\pm$, we obtain that $n|ab+cd\text{ or } n|ab-cd$. In other words, by the definition of $\G$, $\c{ab}=\c{bd}$. Therefore, the operation defined on $\G$ is well defined.\\

Let $\c{a},\c{b}\in\G$. Since $ab\in U(n)$, $\c{ab}\in\G$. Both associativity and commutativity are inherited from $U(n)$. If $\c{a}\in\G$, then $\c{a}\cdot \c{1}=\c{a}=\c{1}\cdot \c{a}$. This shows that $\c{1}$ is the identity of $\G$. Let $\c{a}\in\G$. Let $b$ be the multiplicative inverse of $a$ in $U(n)$. We then have that $\c{a}\cdot\c{b}=\c{ab}=\c{1}=\c{ba}=\c{b}\cdot\c{a}$, so that $\c{b}$ is the multiplicative inverse of $\c{a}$ in $\G$.\\

Define the map $f$ from $U(n)$ to $\G$ as $f(k)=\c{k}$. We shall prove that $f$ is a homomorphism. If $k\equiv k'\pmod n$, then $\c{k}=\{x\in U(n)|x\equiv\pm k\pmod n\}$ and $\c{k'}=\{x\in U(n)|x\equiv\pm k'\pmod n\}$ are clearly the same. Therefore, $f$ is well defined. If $\c{k}\in\G$, $\gcd(k,n)=1$. Hence, $k\in f^{-1}(\c{k})$, making $f$ surjective. Also, if $a,b\in U(n)$, then $f(ab)=\c{ab}=\c{a}\cdot\c{b}=f(a)f(b)$, so that $f$ is a homomorphism. If $x\in\ker(f)$, then $\c{x}=\c{1}$. That is, $x\equiv 1\pmod n$, or $x\equiv -1\pmod n$. So, $\ker(f)=\{\pm 1\}$. By the first isomorphism theorem, $U(n)/\{\pm 1\}\cong\G$, and $|\G|=\frac{\varphi(n)}{2}$.

\end{proof}

As an example, consider $n=11$. We have $U(11)=\{1,2,3,4,5,6,7,8,9,10\}$. Meanwhile, $U'(11)=\{1,2,3,4,5\}$. The following table presents the Cayley table of $U'(11)$, and shows that it is a cyclic group isomorphic to $\Zn{5}$:

\begin{table}[h]
\scalebox{1.5}{
\begin{tabular}{|c||c|c|c|c|c|c|c|c|}
 \hline
 $\cdot$ & $\mathsmaller{\c{1}}$ & $\mathsmaller{\c{2}}$ & $\mathsmaller{\c{3}}$ & $\mathsmaller{\c{4}}$ & $\mathsmaller{\c{5}}$ \\ 
 \hline
 \hline
$\mathsmaller{\c{1}}$ & $\mathsmaller{\c{1}}$ & $\mathsmaller{\c{2}}$ & $\mathsmaller{\c{3}}$ & $\mathsmaller{\c{4}}$ & $\mathsmaller{\c{5}}$\\
\hline
$\mathsmaller{\c{2}}$ & $\mathsmaller{\c{2}}$ & $\mathsmaller{\c{4}}$ & $\mathsmaller{\c{5}}$ & $\mathsmaller{\c{3}}$ & $\mathsmaller{\c{1}}$\\
\hline
$\mathsmaller{\c{3}}$ & $\mathsmaller{\c{3}}$ & $\mathsmaller{\c{5}}$ & $\mathsmaller{\c{2}}$ & $\mathsmaller{\c{1}}$ & $\mathsmaller{\c{4}}$\\
\hline
$\mathsmaller{\c{4}}$ & $\mathsmaller{\c{4}}$ & $\mathsmaller{\c{3}}$ & $\mathsmaller{\c{1}}$ & $\mathsmaller{\c{5}}$ & $\mathsmaller{\c{2}}$\\
\hline
$\mathsmaller{\c{5}}$ & $\mathsmaller{\c{5}}$ & $\mathsmaller{\c{1}}$ & $\mathsmaller{\c{4}}$ & $\mathsmaller{\c{2}}$ & $\mathsmaller{\c{3}}$\\
\hline
\end{tabular}}
\caption{Cayley table for $\G$, $n=11$}
\label{table:1}
\end{table}

\begin{Proposition }
\label{method}
Let $n$ be a composite number. If $p$ and $\frac{p-1}{2}$ are prime, and at least $\frac{p-1}{2}$ primes in $\c{i}\neq \c{1}$ dividing $n$, then $n$ has a $\T_{(p)}$-factorization.
\end{Proposition }

\begin{proof}
Let $p_1,p_2,\ldots,p_k$ be the prime numbers that divide $n$ and are all in $\c{i}$ with $\c{i}\neq \c{1}$. Let $n'=\frac{n}{p_1\cdots p_k}$, or equivalently, $n=p_1\cdots p_kn'$. Since $\Gp$ has prime order, $\Gp$ is cyclic. Not only this, but each nonidentity element in $\Gp$ is a generator of the group. In other words, $\c{i}$ generates $\Gp$. By assumption, we also have that $k\geq\frac{p-1}{2}$. Since $\Gp$ is a group, we have that $\c{n'}$ has an inverse. Since $\c{i}$ is a generator, we have that $(\c{n'})^{-1}=(\c{i})^j$, for some $0<j\leq\frac{p-1}{2}$. If $j=\frac{p-1}{2}$, then $\c{n'}=\c{1}$. This would imply that $p_1***p_{k-1}*(p_kn')$ is a $\Tp'$-factorization of $n$. If $j=\frac{p-1}{2}-1$, we than have that $\c{n'}=\c{i}$, so that $p_1***p_{k-1}*p_k*n'$ is a $\Tp'$-factorization of $n$. Otherwise, $0<j\leq\frac{p-1}{2}-2$. For these cases we have that $k-j\geq 2$. Then, $p_1*p_2*****p_{k-j-1}*(p_{k-j}(p_{k-j+1}\cdots p_kn'))$ is a $\Tp$-factorization of $n$, because $\c{p_{k-j+1}\cdots p_kn'}=\c{1}$. By Theorem 4.17 of \cite{serna}, $n$ has a $\Tp$-factorization.
\end{proof}

The contrapositive of this results yields the first method for determining all the $\Tn$-atoms for certain values of $n$. If we want to find a $\Tn$-atom, it will need to have at most $|\G|$ primes that are equal to each other in $\G$ in its factorization. Noting that whether or not numbers have a $\Tn$-factorization depends only on which elements of $\G$ the dividing primes lie in, and not the actual value that the primes have. This last proposition says that we only have to check a finite amount of numbers before we determine all $\Tn$-atoms for these values of $n$, by checking the product of primes to powers lesser than $\frac{p-1}{2}$. It appears that this is the technique used by Lanterman in \cite{lanterman}. In Section \ref{factorization}, we shall show that this technique does not hold in general when $\frac{p-1}{2}$ is not prime.
\section{Structure of $\G$}

Since Proposition \ref{method} depended on the group structure of $\G$, it will prove useful to understand the group structure of $\G$. First, we shall determine exactly when $\G$ was cyclic.\\

In Proposition \ref{method}, we never used the fact that $p$ has to be a prime. The condition that really mattered was that $\frac{\varphi(n)}{2}$ was prime. A question is, for which $n$ does this happen? Well, the answer is for $n=9,12,18,p,2p$ for $p$ any safe prime associated to a Sophie Germain prime.\\

\begin{Proposition }
\label{Gprime}
If $\frac{\varphi(n)}{2}$ is prime, then $n=9,12,18,p$, or $n=2p$ for any safe prime $p$.
\end{Proposition }

\begin{proof}
Let $n$ be a prime power. We have that $\frac{\varphi(n)}{2}=\frac{1}{2}p^{a-1}(p-1)$. The only way that $n$ is not prime in this case is if $p^{a-1}$ is not $1$. This forces $\frac{p-1}{2}$ to be $1$, if $p$ is odd; or $n=4$, if $p$ is even. The first case then forces $n=9$.\\

Let $n$ not be squarefree, but not be a prime power. A similar analysis as in the previous case implies that the only numbers that satisfy the conditions of the hypothesis are $n=12,18$.\\

Now, let $n$ be squarefree, but not be a prime or $2$ times an odd prime. Let us observe when $n$ is a product of $2$ odd primes, $p$ and $q$. We then have that $\frac{1}{2}\varphi(pq)=\frac{(p-1)(q-1)}{2}$. With the exception of $p,q=3$, however, $p-1,q-1$ both have more than $1$ factor. This would tell us that $(p-1)(q-1)$ has at least $4$ factors so that $\frac{(p-1)(q-1)}{2}$ would not be prime. If $p=3$, we still obtain that $(p-1)(q-1)$ has at least $3$ factors so that, again, $\frac{(p-1)(q-1)}{2}$ is not prime. This shows us that this case produces no numbers that satisfy the hypothesis. For the same reason, if $n$ is odd, squarefree, and more than two primes divide it then we obtain no numbers that satisfy the hypothesis. By the same explanation, we have that if $n$ is even, squarefree, and more than three primes divide it, then such numbers do not satisfy the hypothesis.\\

This only leaves us with $n=p$ or $n=2p$, where $p$ is a prime. This case follows directly from the definition of Sophie Germain primes. This concludes our proof.
\end{proof}

\begin{lemma}
\label{uncyclic}
If $U(n)$ is cyclic, then $\G$ is cyclic.
\end{lemma}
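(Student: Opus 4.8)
The plan is to exploit the surjective group homomorphism $f\colon U(n)\to\G$, $f(k)=\c{k}$, that was constructed in the first Proposition of Section \ref{prelim} (the one identifying $\G$ with $U(n)/\{\pm1\}$). The single fact driving the argument is the standard one that a homomorphic image of a cyclic group is again cyclic, or equivalently that a quotient of a cyclic group is cyclic.

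Concretely, I would assume $U(n)=\langle g\rangle$ for some $g\in U(n)$ and then show $\c{g}=f(g)$ generates $\G$. Given an arbitrary $\c{a}\in\G$, the defining condition of $\G$ gives $a\in U(n)$, so $a=g^m$ for some $m\in\Z$; applying $f$ and using that it is a homomorphism yields $\c{a}=f(a)=f(g^m)=f(g)^m=\c{g}^{\,m}$. Hence every element of $\G$ is a power of $\c{g}$, so $\G=\langle\c{g}\rangle$ is cyclic. One could package this even more briefly as: $\G\cong U(n)/\{\pm1\}$ by the earlier Proposition, and a quotient of the cyclic group $U(n)$ is cyclic.

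There is essentially no obstacle; the only point requiring care is that $f$ really is a group homomorphism, and this has already been verified inside the proof of the Proposition establishing $\G\cong U(n)/\{\pm1\}$, so I would simply invoke that. It is worth noting that the converse is false — for instance $U(8)$ is the Klein four-group while $\abs{\G}=\frac{\varphi(8)}{2}=2$ makes $\G$ cyclic — which is why the statement is only a one-way implication and why the subsequent analysis must still treat non-cyclic $U(n)$ on its own.
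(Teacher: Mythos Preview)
Your argument is correct and matches the paper's own proof essentially verbatim: the paper simply notes that $\G\cong U(n)/\{\pm 1\}$ and invokes the fact that every quotient of a cyclic group is cyclic. Your explicit verification that $\c{g}$ generates $\G$ when $g$ generates $U(n)$ is just an unpacking of that one-line observation.
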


\begin{proof}
Since $\G\cong U(n)/\{\pm 1\}$, the result follows from how all quotients of cyclic groups are cyclic.
\end{proof}

By the Primitive Root Theorem in \cite{kumanduri_romero_1997}, Lemma \ref{uncyclic} implies that $n=2,4,p^k,2p^k$ (for any odd prime $p$) are values for which $\G$ is cyclic. It turns out that these are not the only values of $n$ for which $\G$ is cyclic. The next propositions will help us develop a criterion to find the other values of $n$ such that $\G$ is cyclic.

\begin{Proposition }
\label{key1}
Let $x\in U(n)$ with $|x|=\frac{\varphi(n)}{2}$. If $x^i\equiv -1\pmod n$ then $i=\frac{\varphi(n)}{4}$.
\end{Proposition }

\begin{proof}
If $x^i\equiv -1\pmod n$, then $1<i<\frac{\varphi(n)}{2}$, due to the order of $x$. We then have that $x^{2i}\equiv 1\pmod n$. By Lagrange's Theorem in \cite{gallian_2013}, $\frac{\varphi(n)}{2}$ divides $2i$. This means $2i=\frac{\varphi(n)}{2}k$ for some $k$. Since $1<i<\frac{\varphi(n)}{2}$, we have $2<2i<\varphi(n)$. This forces $k=1$, which implies that $2i=\frac{\varphi(n)}{2}$ so that $i=\frac{\varphi(n)}{4}$, as we wanted to show.
\end{proof}

Note that $\c{x}^k=\c{1}$ if and only if $x^k\equiv\pm 1\pmod n$. Hence, if $x^k\not\equiv\pm 1\pmod n$ for $1\leq k<\frac{\varphi(n)}{2}$, then $\c{x}$ is necessarily a generator of $\G$. The previous proposition shows that if there is an $x\in U(n)$ such that $|x|=\frac{\varphi(n)}{2}$ and $x^{\frac{\varphi(n)}{4}}\not\equiv -1\pmod n$, then $\G=\langle\c{x}\rangle$.\\

We shall now see how the group structure of $U(n)$ gets restricted by having an element of order $\frac{\varphi(n)}{2}$.

\begin{Proposition }
\label{decomp}
If $U(n)$ is not cyclic and has an element of order $\frac{\varphi(n)}{2}$ then $U(n)\cong(\Zn{n_1})\times(\Zn{n_2})$, where $\gcd(n_1,n_2)=2$.
\end{Proposition }

\begin{proof}
By Theorem 8.3 of \cite{gallian_2013}, $U(n)$ is isomorphic to a finite product of cyclic groups with even order. By assumption, we have that the product will have length at least $2$. For the sake of contradiction, assume that the product has length at least $3$, i.e., $U(n)\cong(\Zn{n_1})\times\cdots\times(\Zn{n_k})$ with $k\geq 3$. This means that $\varphi(n)=n_1\cdots n_k$. Note that the largest possible order of an element in $U(n)$ will be $\lcm(n_1,n_2,\ldots,n_k)$. Since $|(1,1,\ldots,1)|=\lcm(n_1,\ldots,n_k)$, it has maximum order. The hypothesis implies that $\frac{\varphi(n)}{2}=\frac{n_1\cdots n_k}{2}\leq\lcm(n_1,\ldots,n_k)<n_1\cdots n_k=|U(n)|$. Therefore, $\lcm(n_1,n_2,\ldots,n_k)$ is equal to $\frac{\varphi(n)}{2}$. Note that $|(1,\ldots,1)|=\lcm(n_1,\ldots,n_k)$, so that by the hypothesis, $\lcm(n_1,\ldots,n_k)$ is forced to be $\frac{\varphi(n)}{2}$. Remember that the least common multiple is the product of the maximum prime powers that occur in $n_1,\ldots, n_k$. Also, the maximum power of $2$ only gets counted once. We then have that if $a$ is the maximum power of $2$ dividing any of the $n_i$s that $2^a|\lcm(n_1,n_2,\ldots,n_k)$ and $2^{a+1}\nmid\lcm(n_1,n_2,\ldots,n_k)$. However, $2^{a+2}|\varphi(n)$ (since $k\geq 3$) so that $2^{a+1}|\frac{\varphi(n)}{2}=\lcm(n_1,\ldots,n_k)$, a contradiction to $a$ being the maximum exponent of $2$ that divides $\lcm(n_1,\ldots,n_k)$.  Therefore, the length of the product is exactly $2$.\\

If $\gcd(n_1,n_2)>2$, then

$$\lcm(n_1,n_2)=\frac{n_1n_2}{\gcd(n_1,n_2)}<\frac{n_1n_2}{2}=\frac{\varphi(n)}{2}$$

again, a contradiction to the existence of an element of order $\frac{\varphi(n)}{2}$. Therefore, $U(n)\cong(\Zn{n_1})\times(\Zn{n_2})$, where $\gcd(n_1,n_2)=2$.

\end{proof}

We shall now prove the last supporting proposition necessary to prove our criterion for when $\G$ is cyclic. It shows that the element referred to after the proof of Proposition \ref{key1} exists, proving that if $U(n)$ has an element of order $\frac{\varphi(n)}{2}$, then $\G$ is cyclic.

\begin{Proposition }
\label{key2}
If $U(n)$ is not cyclic and has elements of order $\frac{\varphi(n)}{2}$, then there exists an $x\in U(n)$ such that $|x|=\frac{\varphi(n)}{2}$ and $x^i\not\equiv -1\pmod n$ for $1\leq i<\frac{\varphi(n)}{2}$.
\end{Proposition }

\begin{proof}
By Proposition \ref{decomp}, we have that $U(n)\cong(\Zn{n_1})\times(\Zn{n_2})$ with $\gcd(n_1,n_2)=2$. Note that the gcd restriction guarantees that one of $n_1$ or $n_2$ is divisible by $2$ and not by $4$. Without loss of generality, let $n_1$ be the one dividible by $2$ and not by $4$. This means that $n_1=2(2k_1+1)$ for some $k_1\geq 0$. Then consider two cases. Either $4|n_2$ or $4\nmid n_2$. In the second case there is an isomorphism, $f$, that goes from $(\Zn{2(2k_1+1)})\times(\Zn{2(2k_2+1)})$ to $U(n)$. Note that both $(1,1)$ and $(2,1)$ are elements of order $\frac{\varphi(n)}{2}$. We then see that

\begin{align*}
f\left(\left(\frac{\varphi(n)}{4},\frac{\varphi(n)}{4}\right)\right)=&f\left(\frac{\varphi(n)}{4}(1,1)\right)\\
=&f((1,1))^{\frac{\varphi(n)}{4}}\\
\end{align*}

and

\begin{align*}
f\left(\left(\frac{\varphi(n)}{2},\frac{\varphi(n)}{4}\right)\right)=&f\left(\frac{\varphi(n)}{4}(2,1)\right)\\
=&f((2,1))^{\frac{\varphi(n)}{4}}\\
\end{align*}

Now let $f((1,1))=x$ and $f((2,1))=y$. Observing each first step, $\left(\frac{\varphi(n)}{4},\frac{\varphi(n)}{4}\right)\neq\left(\frac{\varphi(n)}{2},\frac{\varphi(n)}{4}\right)$, since the first has both components odd and the second has the first component even and the second component odd (we are working in an even moduli). This implies that their images under $f$ are different, since $f$ is a bijection. This then means that $x^{\frac{\varphi(n)}{4}}\neq y^{\frac{\varphi(n)}{4}}$. This then means that not both can be congruent to $-1$ modulo $n$. This shows the existence of the requested $x$ in this case, by Proposition \ref{key1}.\\

Now, assume $4|n_2$. This means $U(n)\cong(\Zn{2(2k_1+1)})\times(\Zn{4k_2})$. Note that this implies that $8|\varphi(n)$. If it were true that $x^\frac{\varphi(n)}{4}\equiv -1\pmod n$, then $\left(x^{\frac{\varphi(n)}{8}}\right)^2\equiv -1\pmod n$. In other words, $-1$ is a quadradic residue modulo $n$.\\

We shall derive a contradiction. We consider two cases. First, let $4|n$. We have that $y^2\pmod n\equiv -1\pmod n$ then implies that $4|y^2+1$, which is false by Proposition 9.3.3 of \cite{kumanduri_romero_1997}. Therefore, $4\nmid n$.\\

By Theorem 8.3 of \cite{gallian_2013} the only way left to obtain $U(n)$ isomorphic to a product of length $2$ is if $n$ is the product of two distinct odd prime powers or is the $2$ times the product of two distinct odd prime powers. Denote the two prime powers by $p^s$ and $q^t$. Note that $\varphi(n)=p^{s-1}(p-1)q^{t-1}(q-1)$, and $U(n)\cong(\Zn{p^{s-1}(p-1)})\times(\Zn{q^{t-1}(q-1)})$. However, the previous isomorphism has to match with this isomorphism. Without loss of generality, let $p^{s-1}(p-1)=2(2k_1+1)$. Note that $2\not|p^{s-1}$, so that $2\mid p-1$ and $4\not|p-1$. This implies that $p-1=4c+2$, for some $c$, so that $p\equiv 3\pmod 4$. Now, return to the congruence $y^2\equiv -1\pmod n$. This would then imply that $p^s\mid y^2-1$, which then implies that $p\mid y^2-1$. In other words, $y^2\equiv -1\pmod p$. However, we know from Proposition 9.1.11 of \cite{kumanduri_romero_1997}, that this only happens when $p\equiv 1\pmod 4$, a contradiction.\\ 

In conclusion, if $4|n_2$, $-1$ is not a quadradic residue modulo $n$, which means that $x^\frac{\varphi(n)}{4}\pmod n\equiv -1\pmod n$ has no solution. In other words, every $x$ that has order $\frac{\varphi(n)}{2}$ would satisfy the requirements of the problem.\\

\end{proof}

With these propositions, we can now tell exactly when $\G$ is a cyclic group.

\begin{Proposition }
\label{mresult}
$\G$ is cyclic if and only if $U(n)$ has an element of order $\frac{\varphi(n)}{2}$.
\end{Proposition }

\begin{proof}
($\implies$) Suppose $U(n)$ does not contain an element of order $\frac{\varphi(n)}{2}$. Then, the orders of elements of $U(n)$ are all lesser than $\frac{\varphi(n)}{2}$. If $\c{x}\in\G$, then $x^i\equiv 1\pmod n $ in $U(n)$ for some $0<i<\frac{\varphi(n)}{2}$. Hence, $(\c{x})^i=1$ in $\G$. Note that $i<\frac{\varphi(n)}{2}=|\G|$ so that $\c{x}$ is not a generator for $\G$. Therefore, this is true for all elements of $\G$ so that $\G$ is not cyclic. By the contrapositive of the proposition, we are done.\\

($\impliedby$) If $U(n)$ is cyclic, then we are done by Lemma $\ref{uncyclic}$. Assume otherwise. By Propositions $\ref{key1}$ and $\ref{key2}$, there exists an $x$ of order $\frac{\varphi(n)}{2}$ such that $x^i\not\equiv\pm 1\pmod n$ for $1\leq i<\frac{\varphi(n)}{2}$. This implies that $(\c{x})^i\neq\c{1}$ for $1\leq i<\frac{\varphi(n)}{2}$. Therefore, $\c{x}$ is a generator for $\G$.\\
\end{proof}

Note, however, that we can rephrase the previous proposition by determining exactly when $U(n)$ has an element of order $\frac{\varphi(n)}{2}$.

\begin{Proposition }
$U(n)$ has an element of order $\frac{\varphi(n)}{2}$ if and only if $n=2^ip^k$, or $n=2^jq^kr^t$ for $p$ any prime, $q,r$ odd primes such that $\gcd(q^{k-1}(q-1),r^{t-1}(r-1))=2$, and $i\in\{0,1,2\}$, $j\in\{0,1\}$.
\end{Proposition }

\begin{proof}
$(\impliedby)$ We have that this direction is true by direct application of Theorem 8.3 of \cite{gallian_2013} to each case.\\

$(\implies)$ If $U(n)$ is cyclic, then we see that the conclusion holds true by the primitive root theorem. Assume that $U(n)$ is not cyclic. By Proposition $\ref{decomp}$, we have that $U(n)$ is isomorphic to a product of two cyclic groups. \\

If $n$ is divisible by $8$, then $n$ has to be a power of $2$, by Theorem 8.3 of \cite{gallian_2013}. Otherwise, if $n$ is divisible by $4$, but not $8$, we obtain by Theorem 8.3 of \cite{gallian_2013} that $4$ times an odd prime power is our only option for $n$. Now let $4\nmid n$. If $n$ is even, then $U(\frac{n}{2})\cong U(n)$, since $2|n$ and $4\nmid n$. Therefore, it is enough to consider the case where $n$ is odd. If $n$ contained more than $2$ odd prime powers, then $U(n)$ would be isomorphic to a product of more than $2$ cyclic groups, a contradiction. If it only contained $1$ prime power, we would then contradict that $U(n)$ is not cyclic. Therefore, $n$ would be the product of exactly $2$ prime powers. Finally, by Proposition $\ref{decomp}$ we have that the gcd requirement of the conclusion is satisfied. This concludes the proof.
\end{proof}

We now restate Proposition $\ref{mresult}$.

\begin{corollary}
\label{explicitcyclic}
$\G$ is cyclic if and only if $n=2^ip^k$, or $n=2^jq^kr^t$ for $p$ any prime, $q,r$ odd primes such that $\gcd(q^k(q-1),r^t(r-1))=2$, and $i\in\{0,1,2\}$, $j\in\{0,1\}$.
\end{corollary}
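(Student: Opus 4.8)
The plan is to derive this as an immediate corollary of the two results just proved, so the "proof" is essentially a two-line chaining of biconditionals. First I would invoke Proposition~\ref{mresult}, which says that $\G$ is cyclic if and only if $U(n)$ contains an element of order $\frac{\varphi(n)}{2}$. Second I would invoke the proposition immediately preceding this corollary, which enumerates exactly the integers $n$ for which $U(n)$ has such an element — the prime-power type $n = 2^i p^k$, and the two-prime type $n = 2^j q^k r^t$ subject to a coprimality restriction guaranteeing that the two cyclic factors of $U(n)$ meet in a group of order $2$. Composing the two equivalences gives the claim: $\G$ is cyclic $\iff$ $U(n)$ has an element of order $\frac{\varphi(n)}{2}$ $\iff$ $n$ has one of the two listed shapes. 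That is the whole argument.

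The only thing requiring care is to transcribe the side conditions of the preceding proposition correctly into the statement: the coprimality clause in the two-prime case is precisely the condition that forces the $\lcm$ of the orders of the two cyclic factors of $U(n)$ to equal $\frac{\varphi(n)}{2}$ (so that a generator of $\G$ is produced by Propositions~\ref{key1} and \ref{key2}), and the ranges $i\in\{0,1,2\}$, $j\in\{0,1\}$ are exactly the $2$-parts permitted by the classification of $U(n)$ in Theorem~8.3 of \cite{gallian_2013}. There is no genuine obstacle here: every substantive step — the reduction of $U(n)$ to a product of at most two cyclic groups (Proposition~\ref{decomp}), the existence of an element of order $\frac{\varphi(n)}{2}$ avoiding $-1$ (Propositions~\ref{key1} and \ref{key2}), and the descent from $U(n)$ to $\G$ (Proposition~\ref{mresult}) — has already been carried out in the earlier propositions of this section, so this corollary is purely a repackaging of Proposition~\ref{mresult} with that enumeration substituted in.
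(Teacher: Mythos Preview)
Your proposal is correct and matches the paper's treatment exactly: the paper gives no separate proof for this corollary, introducing it simply as ``We now restate Proposition~\ref{mresult},'' i.e., as the composition of Proposition~\ref{mresult} with the enumeration in the immediately preceding proposition. Your two-line chaining of biconditionals is precisely that restatement.
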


It is interesting to note that in the case that two distinct odd primes divide $n$ that at least one of them is forced to be equivalent to $3$ mod $4$, since otherwise the gcd condition of the previous proposition would not be satisfied.\\



\section{Determining $\Tn$-irreducible elements for $\G$}
\label{factorization}

\begin{definition}
Let $\c{x_1},\ldots,\c{x_k}$ be the distinct elements of $\G$.
\begin{enumerate}
\item An $\Tn$-element form $\alpha=(\c{x_1})^{m_1}\cdots(\c{x_k})^{m_k}$ is the set of all $x\in\Z^\#$ with $\gcd(x,n)=1$ that have exactly $m_i$ primes in the equivalence class $\c{x_i}\in\G$ dividing it, for every $i$. 
\item We say that $x\in\alpha$ is of the $\Tn$-element form $(\c{x_1})^{m_1}\cdots(\c{x_k})^{m_k}$. 
\item The $\Tn$-element form $\alpha$ is said to be equivalent to $\c{a}$ whenever $\c{a}=\c{x}$ for any $x\in\alpha$.
\end{enumerate}

\begin{remark}
Whenever possible, we shall assume that $x_1<x_2<\cdots<x_k$.
\end{remark}

\end{definition}

\begin{example}
For $n=11$ we have $23\cdot 43\cdot 2^3$ is in the $\Tn$-element form $(\c{1})^{2}\cdot(\c{2})^{3}\cdot(\c{3})^{0}\cdot(\c{4})^{0}\cdot(\c{5})^{0}$.
\end{example} 

It follows that if two numbers are of the same $\Tn$-element form, then they either both are $\Tn$-atoms or both have a non trivial $\Tn$-factorization. This is because whether or not a number has a $\Tn$-factorization only depends on which elements of $\G$ the primes lie in and not on the specific values the primes take. This motivates the following definition:

\begin{definition}
A $\Tn$-element form, $\alpha=(\c{x_1})^{m_1}\cdots(\c{x_k})^{m_k}$, is said to be $\Tn$-irreducible if all its elements are $\Tn$-irreducible elements.
\end{definition}

\begin{remark}
Since we only work with one value of $n$ for the equivalence relation $\Tn$ at any specific moment, we will now use the term irreducible form instead of $\Tn$-irreducible form, as the value of $n$ will be clear from context.
\end{remark}

Another property of the element forms is that they partition the nonunit nonzero elements of $\Z$ that are relatively prime to $n$. It follows that by classifying which element forms are irreducible one can determine all of the $\Tn$-irreducible elements that are relatively prime to $n$. It is worth noting that by Dirichlet's theorem on primes in arithmetic progressions that each element form has infinitely many elements, since there are infinitely many choices for primes in each $\c{x}\in\G$. Despite this, for some values of $n$, only finitely many element forms exist, and hence finitely many familites of $\Tn$-irreducible elements exist.\\

We shall now explore some of the properties that these element forms have.

\begin{Proposition }
\label{auto}
Let $\varphi$ be an automorphism of $\G$. We have that the element form $(\c{x_1})^{m_1}\cdots(\c{x_k})^{m_k}$ is irreducible if and only if the element form $\varphi(\c{x_1})^{m_1}\cdots\varphi(\c{x_k})^{m_k}$ is irreducible.
\end{Proposition }
\begin{proof}
$(\implies)$ Let $\varphi$ be any automorphism of $\G$ and $(\c{x_1})^{m_1}\cdots(\c{x_k})^{m_k}$ be the element form under consideration. Let $p_i\in\c{x_i}$ for each $i$ and let $q_i\in\varphi(\c{x_i})$ for each $i$. Then, $x=p_1^{k_1}\cdots p_m^{k_m}$ is in the element form $(\c{x_1})^{m_1}\cdots(\c{x_k})^{m_k}$ and that $y=q_1^{k_1}\cdots q_m^{k_m}$ is in the element form $\varphi(\c{x_1})^{m_1}\cdots\varphi(\c{x_k})^{m_k}$. For the sake of contradiction, assume that the second is not irreducible. Then there is a $\Tn$-factorization of $y$, $a_1***a_t$ given by some rearrangement of the primes in $y$. Let $b_i$ be the product of primes that results from replacing $q_j$ for $p_j$ in $a_i$ for all $j$ and $i$, so that $\varphi(\c{b}_i)=\c{a}_i$. Since $\c{a_i}=\c{a_j}$ for all $i,j$, we have $\varphi(\c{b_i})=\varphi(\c{b_j})$ for all $i,j$. Since $\varphi$ is one to one, this means that $\c{b_i}=\c{b_j}$ for all $i,j$. Therefore, $b_1***b_t=p_1^{k_1}\cdots p_m^{k_m}=x$ is a nontrivial $\Tn$-factorization of $x$. However, this contradicts that the element form of $x$ is irreducible, concluding the proof.\\

($\impliedby$) The proof is analogous to that of the previous argument.
\end{proof}

Let $\G$ be cyclic. Note that for every generator of the group, there is an automorphism determined by where the generators are mapped. Specifically, one can fix a particular generator and obtain all of the automorphisms of $\G$ by sending that generator to the other ones. As seen previously, these automorphisms preserve $\Tn$-atoms. As a consequence of this previous result, we obtain the following:

\begin{Proposition }
\label{bijection}
If $|\c{x}|=|\c{y}|$ under $\G$, then there is a bijection between the irreducible element forms equivalent to $\c{x}$ and the irreducible element forms equivalent to $\c{y}$ in $\G$.
\end{Proposition }
\begin{proof}
Take $\varphi$ to be the automorphism of $\G$ which maps $\c{x}$ to $\c{y}$. Define $I_z$ be the set of all irreducible element forms equivalent to $\c{z}$ for that $n$. Consider the map $f:I_x\rightarrow I_y$, given by $(\c{x_1})^{m_1}\cdots(\c{x_k})^{m_k}\mapsto \varphi(\c{x_1})^{m_1}\cdots\varphi(\c{x_k})^{m_k}$. Note that $f$ is well defined because $\varphi$ maps elements equal to $\c{x}$ to elements equal to $\c{y}$, $\varphi$ is well defined, and by Proposition \ref{auto}. By the definition of element forms, two element forms are different if and only if their exponents for $\c{x_1},\ldots,\c{x_k}$ do not match. Since $f$ preserves these exponents and $\varphi$ is injective, the images of different element forms have to be different. This shows that $f$ is an injective function. On the other hand, let $(\c{y_1})^{m_1}\cdots(\c{y_k})^{m_k}\in I_y$. Since $\varphi$ is a bijection, there exist $x_1,\ldots, x_k$ such that $\c{y_i}=\varphi(\c{x_i})$. Since $\c{y}=\c{y_1^{m_1}\cdots y_k^{m_k}}$, and by the properties of automorphisms, we have that $\c{y}=\varphi(\c{x_1^{m_1}\cdots x_k^{m_k}})$ so that $\c{x_1^{m_1}\cdots x_k^{m_k}}=\c{x}$. The element form $(\c{x_1})^{m_1}\cdots(\c{x_k})^{m_k}$ is irreducible by Proposition \ref{auto}. And so, $f((\c{x_1})^{m_1}\cdots(\c{x_k})^{m_k})=(\c{y_1})^{m_1}\cdots(\c{y_k})^{m_k}$, proving that $f$ is surjective.  Therefore, $f$ provides a bijection from $I_x$ to $I_y$, as we wanted to show.
\end{proof}

The previous proposition says that if we find all the irreducible element forms equivalent to $\c{x}$, and $|\c{x}|=m$, then we effectively also have all the irreducible element forms for any $y\in\G$, that satisfies $|\c{x}|=|\c{y}|$, thus reducing the number of cases to check in order to find all $\Tn$-irreducible elements.\\

The next proposition shows that under certain conditions we can ignore primes, $p$, such that $\c{1}=\c{p}$ in an element form, when considering whether or not it is irreducible.

\begin{Proposition }
\label{noone}
Let the element form $(\c{x_2})^{m_2}\cdots(\c{x_k})^{m_k}$ not be equivalent to $\c{1}$. Then, $(\c{x_2})^{m_2}\cdots(\c{x_k})^{m_k}$ is irreducible if and only if $(\c{x_1})^m\cdot(\c{x_2})^{m_2}\cdots(\c{x_k})^{m_k}$ is irreducible for all $m$.
\end{Proposition }
\begin{proof}
Let $p_i\in\c{x_i}$ for all $i$. We shall prove both sides by contrapositive.\\

($\implies$) Let $p_1^m p_2^{m_2}\cdots p_k^{m_k}$ have a nontrivial $\Tnp$-factorization $a_1*a_2***a_r$. Define $a_i'=a_i/p_1^c$, where $c$ is the largest power of $p_1$ that divides $a_i$. Note that $a_1'\cdots a_r'=p_2^{m_2}\cdots p_k^{m_k}$ and $\c{a_i}=\c{a_i'}$ for all $i$. Therefore, $a_1'*a_2'***a_r'$. Note that $a_i,a_i'\not\equiv\pm 1\pmod n$ for all $i$, because we imposed the condition that the element form $(\c{x_2})^{m_2}\cdots(\c{x_k})^{m_k}$ not be equivalent to $\c{1}$. Therefore, $(\c{x_2})^{m_2}\cdots(\c{x_k})^{m_k}$ is not irreducible, as we wanted to show.\\

($\impliedby$) Let $m\in\Z^+$. Now let $p_2^{m_2}\cdots p_k^{m_k}$ have a nontrivial $\Tnp$-factorization, $a_1*a_2***a_r$. We now have that $(p_1^m\cdot a_1)*a_2***a_r$ so that $(\c{x_1})^m(\c{x_2})^{m_2}\cdots(\c{x_k})^{m_k}$ is irreducible for all $m$.
\end{proof}

For the next definition keep in mind that for any element form $(\c{x_1})^{m_1}\cdots(\c{x_k})^{m_k}$, it is true that $\c{x_1}=\c{1}$, because $\gcd(n,1)=1$, and because we choose that $x_1<x_2<\cdots<x_k$.

\begin{definition}
\label{subs}
Let $\alpha=(\c{x_1})^{m_1}\cdots(\c{x_k})^{m_k}$ be an element form. Let  $\c{x_i}=\c{x_r}\cdot\c{x_s}$, for $\c{x_r},\c{x_s}\neq\c{1}$ for some $i$. We say that an element form $\beta=(\c{x_1})^{l_1}\cdots (\c{x_k})^{l_k}$ is obtained by a single substitution on $\alpha$ in $\c{x_i}$ if one of the following conditions holds:

\begin{enumerate}
\item $m_i\geq 1$; $l_i=m_i-1$; $l_r=m_r+1$; $l_s=m_s+1$; and $l_j=m_j$ for all other values of $j$, whenever $s\neq r$.
\item $m_i\geq 1$; $l_i=m_i-1$; $l_r=l_s=m_r+2$; and $l_j=m_j$ for all other values of $j$, whenever $s=r$.
\end{enumerate}

We say that $\beta=(\c{x_1})^{l_1}\cdots (\c{x_k})^{l_k}$ is obtained by a double substitution on $\alpha$ if $i=1$ and one of the following conditions holds:

\begin{enumerate}
\item $l_r=m_r+1$; $l_s=m_s+1$; and $l_j=m_j$ for all other values of $j$ (including $j=1$), whenever $s\neq r$.
\item $l_r=l_s=m_r+2$; and $l_j=m_j$ for all other values of $j$ (including $j=1$), whenever $s=r$.
\end{enumerate}

\end{definition}

This definition is just a formalization of saying that $\beta$ is a single substitution of $\alpha$, if we take $x_i$ in $\alpha$ and replace it with $x_r\cdot x_s$ where $\c{x_i}=\c{x_r}\cdot\c{x_s}$, and that $\beta$ is a double substitution of $\alpha$, if we just add on two factors into $\alpha$ whose product is equivalent to $\c{1}$.\\

\begin{definition}
\label{simp}
Let $\alpha=(\c{x_1})^{m_1}\cdots(\c{x_k})^{m_k}$ be an element form. Let  $\c{x_i}=\c{x_r}\cdot\c{x_s}$, for some $i$. We say that an element form $\beta=(\c{x_1})^{l_1}\cdots (\c{x_k})^{l_k}$ is obtained by a single simplification on $\alpha$ of $\c{x_r},\c{x_s}$ if $i\neq 1$ and one of the following conditions holds:

\begin{enumerate}
\item $m_r,m_s\geq 1$; $l_i=m_i+1$; $l_r=m_r-1$; $l_s=m_s-1$; and $l_j=m_j$ for all other values of $j$, whenever $s\neq r$.
\item $m_r\geq 2$; $l_i=m_i+1$; $l_r=l_s=m_r-2$; and $l_j=m_j$ for all other values of $j$, whenever $s=r$.
\end{enumerate}

We say that $\beta=(\c{x_1})^{l_1}\cdots (\c{x_k})^{l_k}$ is obtained by a double simplification on $\alpha$ of $\c{x_r},\c{x_s}$ if $i=1$ and one of the following conditions holds:

\begin{enumerate}
\item $m_r,m_s\geq 1$; $l_r=m_r-1$; $l_s=m_s-1$; and $l_j=m_j$ for all other values of $j$ (including $j=1$), whenever $s\neq r$.
\item $m_r\geq 2$; $l_r=l_s=m_r-2$; and $l_j=m_j$ for all other values of $j$ (including $j=1$), whenever $s=r$.
\end{enumerate}

\end{definition}

Definition \ref{simp} is essentially the inverse (or dual) process of Definition \ref{subs}. In Definition \ref{simp}, when two factors of the form $\beta$ simplify to $\c{1}$, we do not add on the factor of $\c{1}$.


We shall use these tools developed to prove the following proposition:

\begin{Proposition }
\label{reduce}
In $\G$, let $\alpha=(\c{x_1})^{m_1}\cdots(\c{x_k})^{m_k}$ be an irreducible element form that $x$ is an element of. Any simplification (single or double) of $\alpha$ is irreducible. 
\end{Proposition }

\begin{proof}

We shall prove this by contrapositive. Let $p_i\in\c{x_i}$ for all $i$ and let  $(\c{x_1})^{l_1}\cdots(\c{x_k})^{l_k}$ be the simplification of $\alpha$, as in Definition \ref{simp}. Let $p_1^{m_1}\cdots p_k^{m_k}\in\alpha$ and let $p_1^{l_1}\cdots p_k^{l_k}$ be in the simplification. By assumption, the simplification has a nontrivial $\Tnp$-factorization $a_1*a_2***a_t$. We need to check that the proposition holds for all four cases of the definition. We do only the first case for both single and double simplifications (the other cases are similar).\\

For the single simplification case, $p_i\mid a_c$ for some $c$. Now, let $a_j'=a_j$ for $j\neq c$, and $a_c'=a_c\cdot p_r\cdot p_s/p_i$. By Definition \ref{simp}, $a'_c$ is an integer and $\c{a_c'}=\c{a_c}$. Then, $a_1'\cdots a_t'=p_1^{m_1}\cdots p_k^{m_k}$ and $a_1'***a_t'$ so that $\alpha$ is not irreducible. For the double simplification case, let $a_1'=a_1\cdot p_r\cdot p_s$ and $a_j'=a_j$ for all other $j$. We then have that $a_1'\cdots a_t'=p_1^{m_1}\cdots p_k^{m_k}$ and $a_1'***a_t'$ so that $\alpha$ is not irreducible. This concludes the proof.
\end{proof}

\begin{definition}
\label{sequence}
We define
\begin{enumerate}
\item A sequence of element forms is any sequence $\{\alpha_k\}_{k\in\Z^+}$, where $\alpha_n$ is an element form for each $n$.
\item A sequence of irreducible element forms $\{\alpha_k\}$ is such that $\alpha_n$ is irreducible for all $n$. 
\item An element form $\alpha$ can be obtained through a sequence of substitutions, if there exists a sequence of element forms $\{\alpha_k\}_{k=1}^m$ such that $\alpha_1=(\c{a})^1$ for some $a\in\G$, $\alpha_m=\alpha$ and $\alpha_{i+1}$ is obtained through a substitution on $\alpha_i$ for all $0<i\leq m-1$.
\item Conversely,  an element form $\alpha$ can be obtained through a sequence of simplifications, if there exists a sequence of element forms $\{\alpha_k\}_{k=1}^m$ such that $\alpha_m=(\c{a})^1$ for some $a\in\G$, $\alpha_1=\alpha$ and $\alpha_{i+1}$ is obtained through a simplification on $\alpha_i$ for all $0<i\leq m-1$.
\end{enumerate}
\end{definition}

If $\alpha$ is obtained by a sequence of substitutions $\{\alpha_k\}_{k=1}^m$, then $\alpha$ can be obtained through a sequence of simplifications $\{\beta_k\}_{k=1}^m$, where $\beta_k=\alpha_{m-k+1}$ for all $k$.\\

We shall now develop methods for finding irreducible element forms.

\begin{corollary}
\label{allirr}
If $\alpha$ is an irreducible element form, then every sequence of simplifications of $\alpha$, $\{\alpha_k\}_{k=1}^m$, satisfies that $\alpha_k$ is irreducible for all $k$.
\end{corollary}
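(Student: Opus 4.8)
The plan is to obtain this as an immediate iteration of Proposition \ref{reduce}, by induction on the index $k$. Fix an irreducible element form $\alpha$ and let $\{\alpha_k\}_{k=1}^m$ be any sequence of simplifications of $\alpha$ in the sense of Definition \ref{sequence}, so that $\alpha_1=\alpha$, $\alpha_m=(\c{a})^1$ for some $\c{a}\in\G$, and for each $i$ with $0<i\leq m-1$ the form $\alpha_{i+1}$ is obtained from $\alpha_i$ by a single or double simplification. The base case is free: $\alpha_1=\alpha$ is irreducible by hypothesis.

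For the inductive step, assume $\alpha_k$ is irreducible for some $k<m$. Since $\alpha_{k+1}$ is obtained from $\alpha_k$ by a (single or double) simplification, Proposition \ref{reduce} applied to the irreducible form $\alpha_k$ gives that $\alpha_{k+1}$ is irreducible. Hence $\alpha_k$ is irreducible for all $1\leq k\leq m$, which is exactly the assertion of the corollary; note that the argument uses nothing about the particular sequence chosen beyond the fact that consecutive terms are related by a simplification, so it covers \emph{every} such sequence.

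The one point worth a sentence of care is the standing hypothesis in Proposition \ref{reduce} that the element form in question has an element of $\Z^\#$. This is handled by the earlier remark that, via Dirichlet's theorem on primes in arithmetic progressions, every element form with at least one prime factor is nonempty, together with the observation that each simplification step strictly decreases the total number of prime factors counted with multiplicity; since the final term $\alpha_m=(\c{a})^1$ still has one prime factor, every $\alpha_k$ in the chain has at least one, and so Proposition \ref{reduce} is applicable at each step. I do not expect any genuine obstacle here: all of the real content is in Proposition \ref{reduce}, and the corollary is simply its propagation along the chain of simplifications.
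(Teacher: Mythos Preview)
Your proof is correct and follows exactly the paper's approach: the paper's own proof is the single sentence ``The result is clear from Definition \ref{sequence} and Proposition \ref{reduce},'' and your argument is precisely the spelled-out induction that this sentence is pointing to. Your extra paragraph about nonemptiness of element forms is unnecessary (the paper already observed via Dirichlet that every element form is nonempty, so the clause ``that $x$ is an element of'' in Proposition \ref{reduce} is not a genuine restriction), but it is harmless and correctly argued.
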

\begin{proof}
The result is clear from Definition \ref{sequence} and Proposition \ref{reduce}.
\end{proof}

\begin{Proposition }
\label{prealg}
Each irreducible element form equivalent to $\c{x}\in\G$, and with $m_1=0$, can be obtained by a sequence of substitutions $\{\beta_k\}_{k=1}^m$, where $\beta_k$ is irreducible for all $k$.
\end{Proposition }
\begin{proof}

Let  $\alpha=(\c{x_1})^{m_1}\cdots(\c{x_k})^{m_k}$ be an irreducible element form equivalent to $\c{x}\in\Gp$. If there exists any sequence of substitutions for $\alpha$, then the condition that each entry in the sequence is irreducible holds by Corollary \ref{allirr}. If the elements of $\alpha$ are primes, then we are done. Define a sequence of simplifications on $\alpha$ in the following way: if the elements of $\alpha_i$ have more than one prime dividing them, let $\alpha_{i+1}$ be any fixed simplification of $\alpha_i$, (which exists by Definition \ref{simp}). Since the exponents of $\alpha$ are all finite, and $\G$ is finite, we have that this process eventually reaches $\alpha_m=(\c{x})^1$ for some $m$. Finally, we conclude that  $\{\beta_k\}_{k=1}^m$, where $\beta_k=\alpha_{m-k+1}$ for all $k$ is the sequence of substitutions that we were looking for.
\end{proof}

Note that the restriction that $m_1=0$, is due to the definition of substitutions on element forms does not give a way to add on factors equivalent to $\c{1}$. However, Proposition $\ref{noone}$ makes it so that this does not matter.\\

With this result, we can come up with the following preliminary algorithm for finding $\Tn$-irreducible elements.

\begin{algorithm}
To find all irreducible element forms in $\G$,
\begin{enumerate}
\item Fix elements of distinct order $\c{a_1},\ldots, \c{a_k}\in\G$.

\item Set $i=1$

\item Set the element form $(\c{x})^1$ to be equivalent to $\c{a_i}$.

\item Obtain all element forms obtainable by one single or double substitutions on $(\c{x})^1$. Keep the ones that are irreducible and discard all that are not.

\item Repeat this process with the resulting element forms.

\item Return to step (5) until all irreducible element forms equivalent to $\c{a_i}$ are found.

\item Set $i=i+1$ and return to step (3) or move to next step if $i=k$.

\item Apply all automorphisms of $\G$ to all the irreducibles, and keep one copy of each.
\end{enumerate}
\end{algorithm}

If we do not restrict the value of $n$, Proposition \ref{method} may not hold. As a consequence of this, we may never reach step (7) at times. An example of how this algorithm can continue indefinitely is how $(\c{3})^i(\c{5})^1$ is irreducible for all $i$ in $\T_{(13)}$. Being able to find all irreducible element forms with an algorithm that terminates in a finite amount of steps is still an open problem. However, if $n$ is a safe prime associated to a Sophie Germain prime, then Proposition \ref{method} does hold, guaranteeing that this algorithm terminates in a finite number of steps. For general values of $n$, the only way to guarantee that the algorithm theoretically terminates is if the algorithm is executed in a parallel way for all values of $i$. It is specifically because of Proposition \ref{prealg} that we can discard the element forms that are not irreducible in steps (4) and (5), thus making this algorithm finite for safe primes. \\

This method can be further optimized, however. First note that if $\alpha$ is a nontrivial irreducible form equivalent to $\c{1}$, then any double substitution done on $\alpha$ is not irreducible. In this case, doing a double substitution would just add two primes whose product is in $\c{1}$, while the rest of the number (excluding the added primes) is also in $\c{1}$. So, in the case of $\alpha$ equivalent to $\c{1}$, we need only do single substitutions. One could ask the question of whether or not it is possible to obtain all irreducible element forms by doing only single substitutions in general. \\

\begin{Proposition }
Any irreducible element form (without a power of $\c{1}$) equivalent to $\c{x}\in\G$ can be obtained through as a sequence of simple substitutions on the element form $(\c{x})^1$.
\end{Proposition }
\begin{proof}

The case of $\alpha$ being equivalent to $\c{1}$ was already considered. Assume that $\alpha$ is equivalent to $\c{x}\neq\c{1}$. We shall prove the result by induction on the number of primes that divide the elements of $\alpha$. If $a=1$ or $a=2$, then the result is clear. Assume that for all $1\leq a<l$ that the result is true, and we shall prove it for $l$. Let $\{\alpha_k\}_{k=1}^m$ be a sequence of substitutions of element forms that ends in $\alpha$, which exists, by Proposition \ref{prealg}. If $\alpha_m=\alpha$ is obtained by a single substitution on $\alpha_{m-1}$, then we are done. This is because, by the inductive hypothesis we have that the number of primes that divide the elements of $\alpha_{m-1}$ is lesser than $l$, meaning that there exists a sequence of single substitutions ending in $\alpha_{m-1}$, to which we can append the single substitution that makes $\alpha$. Therefore, we can assume that for every $\alpha$ that has elements with exactly $l$ primes dividing it, that for every sequence of substitutions that ends in $\alpha$ it is true that the last substitution in the sequence is a double substitution. Let $\{\alpha_k\}_{k=1}^m$ be one such sequence. Let $x=p_2^{m_1}\cdots p_k^{m_k}$ be an element of $\alpha$. If there existed $p_i,p_j$ such that $m_i,m_j\geq 1$ that satisfy $\c{p_i\cdot p_j}\neq\c{1}$, then the single simplification on $\alpha$ by putting together $\c{x_i}$ and $\c{x_j}$ is a single simplification, and the resulting element form would have a sequence of single substitutions producing it by the inductive hypothesis, a contradiction. Therefore, for all $p_i,p_j$ it is true that $\c{p_i\cdot p_j}=\c{1}$. In other words, $x$ would have at most two different primes dividing it. If there were at least three different primes dividing $x$, $p_r,p_s,p_t$, then the product of at least two of them would not be the identity in $\Gp$. It is then true that if $x=p_r^bp_s^c$, then $|b-c|=1$, since otherwise it would contradict that $\alpha$ is an irreducible element form. At least one of $b$ or $c$ has to be greater than $1$ in order for $l>2$. Additionally, $b,c\neq 0$, since otherwise $\alpha$ would not be an irreducible element form. Without loss of generality, let $b>1$. All of these conditions would imply that $\c{p_r^2}=\c{1}$, meaning that $\c{p_r}$ is its own inverse. However, $\c{p_s}$ is the inverse of $\c{p_r}$. This implies that $r=s$, a contradiction to $\alpha$ being an irreducible element form.\\

Since we considered all possible cases, this concludes the proof.
\end{proof}
With this, we can eliminate all the cases in which we would attempt a double substitution in the algorithm (which in practice saves about half the run time). Modifying the algorithm, we obtain:

\begin{algorithm}
\label{alg}
To find all irreducible element forms in $\G$,
\begin{enumerate}
\item Fix elements of distinct order $\c{a_1},\ldots, \c{a_k}\in\G$.

\item Set $i=1$

\item Set the element form $(\c{x})^1$ to be equivalent to $\c{a_i}$.

\item Obtain all element forms obtainable by one single substitution on $(\c{x})^1$. Keep the ones that are irreducible and discard that are not.

\item Repeat this process with the resulting element forms.

\item Return to step (5) until all irreducible element forms equivalent to $\c{a_i}$ are found.

\item Set $i=i+1$ and return to step (3) or move to next step if $i=k$.

\item Apply all automorphisms of $\G$ to all the irreducibles, and keep one copy of each.
\end{enumerate}
\end{algorithm}

The same comments that were said for the previous algorithm apply for this algorithm, especially the observation that if $n$ is not a safe prime, then we cannot guarantee that this algorithm will terminate in a finite number of steps.\\

We shall give an example illustrating this algorithm for $n=11$. In this case, $\G$ has $5$ elements, $\c{1},\c{2},\c{3},\c{4},\c{5}$. Observe the Cayley table for $\G$ in Table \ref{table:1}. Since all the elements in any element form are all irreducible or are all not irreducible, it suffices to work with one representative of each element different from $\c{1}$, so that in this specific case, we can consider $23\in\c{1},2\in\c{2}$, $3\in\c{3}$, $7\in\c{4}$, and $5\in\c{5}$. We can represent the algorithm as the following graph, for $a_2$:

$$\scalebox{1}{
\begin{tikzpicture}
   \node at (12, 0) {$(1.1)$};
   \node at (12, 2) {$(1.2)$};
   \node at (12, 4) {$(1.3)$};

   \node at (0, 0) {$2$};
   \node at (2,0) {$\mathlarger{\mathlarger{\mathlarger{\mathlarger{\Longrightarrow}}}}$};
   \node at (8,2) {$\mathlarger{\mathlarger{\mathlarger{\mathlarger{\Longrightarrow}}}}$};
   \node at (4, 0) {$2$};
   
   \node at (4, 2) {$7\cdot 5$};
   \node at (6, 2) {$3^2$};
   
   \draw (5.5,1.5) -- (6.5,2.5);
   \draw (5.5,2.5) -- (6.5,1.5);
   
   \draw[thick,->] (4,0.2) -- (4,1.9);
   
   \draw[thick,->] (4,0.2) -- (6,1.9);
   \node at (10, 0) {$2$};
   
   \node at (10, 2) {$7\cdot 5$};
   
   \node at (10, 4) {$2^2\cdot 5$};
   
   \draw[thick,->] (10,0.2) -- (10,1.9);
   \draw[thick,->] (10,2.2) -- (10,3.9);
\end{tikzpicture}}$$

Note that at each level, every product has the same number of factors as the number of the level it is in. So we start with a single node, as in (1.1). We then do all simple substitutions, which can be seen in (1.2), in the second step. Now, with this level, (1.2), we see that the rightmost number turns out to factorize. Thus, we cross it out as it does not produce any irreducible elements. The next level produces $4$ terms (1.3). Only one of them is an irreducible element. Finally, doing substitutions on the remaining term produces no irreducible elements. This concludes the process for $a_2$, by the previous discussion. When the correspondence is made from the previous graph to the element forms, and all the automorphisms $\varphi$ are applied, we obtain the following graph:

$$\scalebox{1}{
\begin{tikzpicture}

   \node at (0, 0) {$(\varphi(\c{2}))^1$};
   \node at (2,0) {$\mathlarger{\mathlarger{\mathlarger{\mathlarger{\Longrightarrow}}}}$};
   \node at (8,2) {$\mathlarger{\mathlarger{\mathlarger{\mathlarger{\Longrightarrow}}}}$};
   \node at (4, 0) {$(\varphi(\c{2}))^1$};
   
   \node at (4, 2) {$(\varphi(\c{4}))^1\cdot (\varphi(\c{5}))^1$};
   \node at (6, 2) {$(\varphi(\c{3}))^2$};
   
   \draw (5.5,1.5) -- (6.5,2.5);
   \draw (5.5,2.5) -- (6.5,1.5);
   
   \draw[thick,->] (4,0.2) -- (4,1.9);
   
   \draw[thick,->] (4,0.2) -- (6,1.9);
   \node at (10, 0) {$(\varphi(\c{2}))^1$};
   
   \node at (10, 2) {$(\varphi(\c{4}))^1\cdot (\varphi(\c{5}))^1$};
   
   \node at (10, 4) {$(\varphi(\c{2}))^2\cdot (\varphi(\c{5}))^1$};
   
   \draw[thick,->] (10,0.2) -- (10,1.9);
   \draw[thick,->] (10,2.2) -- (10,3.9);
\end{tikzpicture}}$$


There are three automorphisms other than the identity for $U'(11)$: the one that sends $2$ to $3$, the one that sends $2$ to $4$ and the one that sends $2$ to $5$. Therefore, by Proposition \ref{bijection}, the previous graph contains all the element forms equivalent to $a_2,a_3,a_4$, and $a_5$. Explicitly, the irreducible element forms are $(\c{1})^k(\c{5})^1$, $(\c{1})^k(\c{4})^1$, $(\c{1})^k(\c{3})^1$, $(\c{1})^k(\c{2})^1$, $(\c{1})^k(\c{2})^1(\c{3})^1$, $(\c{1})^k(\c{2})^1(\c{4})^1$, $(\c{1})^k(\c{3})^1(\c{5})^1$, $(\c{1})^k(\c{4})^1(\c{5})^1$, $(\c{1})^k(\c{2})^2(\c{5})^1$, $(\c{1})^k(\c{3})^2(\c{4})^1$, $(\c{1})^k(\c{3})^1(\c{4})^2$, $(\c{1})^k(\c{2})^1(\c{5})^2$.\\

Similarly, we can do this graph for $a_1$. Again, discarding all the elements that factorize, (3 in the fourth level and 12 in the fifth):

$$\scalebox{1}{
\begin{tikzpicture}
   
   \node at (0, 0) {$23$};
   
   \node at (-1, 2) {$2\cdot 5$};
   \node at (1, 2) {$3\cdot 7$};

   \node at (-3, 4) {$3\cdot 2^2$};
   \node at (-1, 4) {$2\cdot 7^2$};
   \node at (1, 4) {$5\cdot 3^2$};
   \node at (3, 4) {$7\cdot 5^2$};
   
   \node at (-3, 6) {$2^3\cdot 7$};
   \node at (-1, 6) {$7^3\cdot 5$};
   \node at (1, 6) {$3^3\cdot 2$};
   \node at (3, 6) {$5^3\cdot 3$};
   
   \draw[thick,->] (0,0.2) -- (1,1.9);
   \draw[thick,->] (0,0.2) -- (-1,1.9);
   
   \draw[thick,->] (-1,2.2) -- (2.8,3.8);
   \draw[thick,->] (1,2.2) -- (-2.8,3.8);
   \draw[thick,->] (1,2.2) -- (3,3.8);
   \draw[thick,->] (-1,2.2) -- (-3,3.8);   
   \draw[thick,->] (-1,2.2) -- (0.8,3.8);
   \draw[thick,->] (1,2.2) -- (-0.8,3.8);
   \draw[thick,->] (1,2.2) -- (1,3.8);
   \draw[thick,->] (-1,2.2) -- (-1,3.8);   
   
   \draw[thick,->] (1,4.2) -- (1,5.8);
   \draw[thick,->] (-1,4.2) -- (-1,5.8);  
   \draw[thick,->] (3,4.2) -- (3,5.8);
   \draw[thick,->] (-3,4.2) -- (-3,5.8);  
   \draw[thick,->] (1,4.2) -- (2.8,5.8);
   \draw[thick,->] (-1,4.2) -- (-2.8,5.8);  
   \draw[thick,->] (3,4.2) -- (-0.8,5.8);
   \draw[thick,->] (-3,4.2) -- (0.8,5.8);  
   
\end{tikzpicture}}$$

Explicitly, the irreducible element forms for this graph are: $(\c{1})^1$, $(\c{2})^1(\c{5})^1$, $(\c{3})^1(\c{4})^1$, $(\c{2})^2(\c{3})^1$, $(\c{2})^1(\c{4})^2$, $(\c{3})^2(\c{5})^1$, $(\c{4})^1(\c{5})^2$, $(\c{2})^3(\c{4})^1$, $(\c{4})^3(\c{5})^1$, $(\c{2})^1(\c{3})^3$, $(\c{3})^1(\c{5})^3$. The important thing to keep in mind is that the irreducible element forms are easily recoverable from this graph. We conclude that the numbers that remain in the graphs uniquely induce the element forms containing all irreducible elements in $U'(11)$, for a total of $3\cdot 4+11=23$ irreducible element forms.\\

There are two final things that we are going to take note on, with respect to the implementation of Algorithm \ref{alg}. Firstly, for any given element $\c{x}\in\G$, how do we know the values of $y,z$ such that $\c{x}=\c{y}\cdot\c{z}$? Secondly, how do we know that a specific element form is not irreducible, so that it can be discarded as in steps (4) and (5) of the algorithm? Both of these questions are crucial to the algorithm in steps (4) and (5). We have seen throughout the paper that which element forms are irreducible or not depends almost exclusively on the group structure of $\G$. For example, since $U'(7)\cong U'(9)$, it is easy to see that the irreducible element forms for both $n=7$ and $n=9$ are in bijection. By considering all different values of $n$ individually, there is bound to be a lot of redundancy. In order to avoid this, we could instead work with the more familiar finite abelian groups that $\G$ is isomorphic to. For example, since $U'(7),U'(9)\cong\Zn{3}$, we could extend the rules for factorization to $\Zn{3}$ and work there alone. The elements of $\Zn{3}$ are $\c{0},\c{1},\c{2}$. The rule for factorization is that if you have some sum involving the three elements of $\Zn{3}$, and the sum can be simplified in some way which makes all new terms (there have to be at least 2 terms in the simplification) the same, then a number is said to factor. This same rule can be extended to any finite abelian group, including non cyclic groups like $\Zn{2}\times\Zn{2}$, which are necessary for the study of $\G$ for values like $n=24$.\\

For the first question, a naïve implementation could involve making the Cayley table for each $\G$, and then making separate lists for each element, which tell what are the possible single substitutions. However, if $n$ is a value for which $\G\cong\Zn{m}$, then the previous discussion suggests that we can run the algorithm on $\Zn{m}$ instead. The process of generating numbers $\c{y},\c{x}$ such that $\c{z}=\c{x}+\c{y}$ is less complicated, as the list

$$\c{1}+\c{z-1},\c{2}+\c{z-2},\cdots,\c{m-1}+\c{z-m+1}$$

contains all relevant values of $\c{x}$ and $\c{y}$ with multiplicity $2$. So, this solves the first question, especially when $\G$ is cyclic.\\

For the second question, consider the number $x$, which belongs to an element form generated by the algorithm. If we wanted to see whether or not this number has a $\Tn$-factorization, a brute force method could be to generate all possible factorizations of $x$ and then checking, one by one, if any of the factorizations is also a $\Tn$-factorization. To illustrate why this is difficult and redundant, consider $p^k$. The number of factorizations that $p^k$ has in $\Z$ is the number of partitions of $k$. Since the number of partitions grows incredibly fast as $k$ is bigger, this would prove difficult to do for big values of $k$. The situation for a general number $x$ is even worse. So, consider the following method. Assume that we are working in $\Zn{m}$ (this method can be extended to other groups, however). Let us say that we are considering an element form $\alpha$ in the $m$-th level of the tree produced by the algorithm, and we do not know if $\alpha$ is irreducible or not. Since $\alpha$ is in the $l$-th level, this means that $\alpha$ is a sum with exactly $l$ terms. First consider the case of when $\alpha$ is not equivalent to $\c{0}$. Let $\alpha$ not be the multiple of some element of $\Zn{m}$ (this is easy to detect). We then have that if $\alpha$ is not irreducible, then there exists some single or double simplification of $\alpha$ that is not irreducible either. By contradiction, assume this to be false. Since $\alpha$ is not a multiple of some fixed element, it would take at least one simplification on $\alpha$ before the resulting form is the multiple of some element. In particular, if any simplification of $\alpha$ is taken with respect to two element $\c{y},\c{z}\in\Zn{m}$ which appear grouped together in any factorization of $\alpha$, then such element form has to be not irreducible as well, a contradiction. This fails in the case that $\alpha$ is equivalent to $\c{0}$ because in the definition of simplification, we discard factors of $\c{0}$ that arise in any simplification. Thus, if $\alpha$ is a sum of length $l\geq 3$ (for fixed $l$), the only other extra condition that has to be imposed for $\alpha$ to be irreducible is that no two terms in $\alpha$ sum to $\c{0}$.\\

To summarize, a method of telling, in steps (4) and (5) of the algorithm, whether or not an element form, $\alpha$, is irreducible or not, we only have to check three conditions: if $\alpha$ is equivalent to $\c{0}$ and is a sum of at least $3$ terms, that there do not exist two terms that sum to $\c{0}$, that any one simplification done on $\alpha$ is in the tree produced by the algorithm, and that $\alpha$ is not the multiple of some element in $\Zn{m}$. While this seems more convoluted, in the worst of cases (when every term in $\alpha$ is different), if $\alpha$ has $l$ terms, one would only have to check $O(l^2)$ cases.\\

With all this in mind, the following table summarizes the number of irreducible element forms for $\Zn{m}$, for the values of $m$ that have been completely classified:

\begin{table}[h]
\scalebox{1}{
\begin{tabular}{|c||c|c|}
 \hline
 $m$ & Number of irreducible element forms for $\Zn{m}$& Number of values $n$ such that $U'(n)\cong\Zn{m}$ \\ 
 \hline
 \hline
$1$ & 1 & 3\\
\hline
$2$ & 2 & 4\\
\hline
$3$ & 4 & 4\\
\hline
$4$ & $7^*$ (infinitely many)& 4\\
\hline
$5$ & 23& 2\\
\hline
$6$ & $124^*$ (infinitely many)& 6 \\
\hline
$7$ & 108& 0\\
\hline
$11$ & 1398& 2\\
\hline
$13$ & 4367& 0\\
\hline
$17$ & 33321& 0\\
\hline
$19$ & 84544& 0\\
\hline
$23$ & 465774& 2\\
\hline
\end{tabular}}
\caption{Algorithm results summary}
\label{table:2}
\end{table}

Values with asterisks ($*$) imply that the number represents the number of element forms in which any fixed term is repeated less times that the term's order. In general, all irreducible elements that do not satisfy this condition can be determined from the element forms that satisfy the previous condition. The precise reasons for this are outside the scope of this paper. However, the methods known to determine all the irreducible element forms for these values of $n$ (i.e. $m=6$) require checking cases by hand. It is for this reason that composite values of $m$ have not been explored much. On the other hand, to see why the third column of the table is $0$ sometimes, it is enough to use Proposition \ref{Gprime}. This proposition tells us that almost all the primes for which there is a corresponding $U'(n)$ are Sophie Germain primes. In particular, $7,13,17$ and $19$ are not Sophie Germain primes. These values were chosen because the algorithm terminates in a finite number of steps, and is able to be executed without any human intervention.\\

As a final note, if $U'(p)\cong\Zn{m}$ and $m$ appears on the table, then for those values of $p$ we have found all $\Tp$-atoms. It is not too difficult to check that, if $x=pt$, then $x$ is a $\Tp$-atom if and only if $p\nmid t$. This would mean that all $\Tp$-atoms for these cases would be the atoms referred to in the table along with numbers $pt$ with $p\nmid t$.

\section{Conclusion and Future Works}\label{concl}

As seen from the results, we were able to find exactly when $\G$ is cyclic. The general group structure for $\G$ is the following: 

\begin{Proposition }
Let $2^k$ be the minimal order of the cyclic groups that compose $\syl{2}{U(n)}$. If $U(n)\cong F\o B\o\Zn{2^k}$ where $F$ has odd order, and $B\o\Zn{2^k}\cong\syl{2}{U(n)}$, then $U'(n)\cong F\o B\o\Zn{2^{k-1}}$.
\end{Proposition }

However, the proof is outside the scope of this paper. Additionally, we were able to determine an algorithm which theoretically finds all $\Tn$-irreducible elements that are relatively prime to $n$. If $n=p$, this means that the algorithm theoretically finds all $\Tp$-atoms. In particular, if $p$ is a safe prime associated to a Sophie Germain prime, then this algorithm terminates in a finite number of steps. Previous to this research, the $\Tn$-atoms had only been known for $n=1,2,3,4,5,6,8,10$ and $n=12$, which correspond to $\Zn{1}$ and $\Zn{2}$ in Table \ref{table:2}. Other than for these values, there only existed speculations for the list of all $\Tn$-atoms for $n=7$ and $n=11$. In this research the irreducible element forms for $\Zn{m}$ were additionally found for $m=3,5,7,11,13,17,19$ and $23$, which correspond to $10$ new values of $n$ such that we either know all or a significant portion of the $\Tn$-atoms. For $m=4,6$ all irreducible element forms are known for $\Zn{m}$, even though human aid was required. For these two values there correspond another $10$ values of $n$ for which a significant amount (if not all) of the $\Tn$ are acounted for.\\

On ther other hand, little is know about the $\Tn$-atoms that are not relatively prime to $n$. Prior to this work, $\Tn$-atoms were known for $n=4,8,12$, and the methods used could help shed light on this problem. Complete work has been done for $n=9, 14, 15, 16, 18, 22, 25, 27, 32, 49$ and $n=121$, with tools that are out of the scope of this paper. Despite this, the irreducible element forms for many of these values are still unknown, particularly $n=25, 27, 32$ and $121$. The work done for these values has not resulted in an algorithm that can be implimented as easily as the one in this paper, and thus requires a lot of human calculation and a different approach. So a possible future work would try to develop such algorithm for the $\Tn$-atoms that are not relatively prime to $n$.\\

The findings of this paper are relevant for the study of $\Tn$-factors and $\Tn$-graphs. Additionally, due to how the algorithm ends in finitely many steps for almost exclusively safe primes, there might be applications with anything that involves Sophie Germain primes.

\bibliography{paper5}{}
\bibliographystyle{plain}
\end{document}